\def\uhr{\upharpoonright}
\theoremstyle{plain}
\newtheorem{theorem}{Theorem}[section]
\newtheorem{lemma}[theorem]{Lemma}
\newtheorem{corollary}[theorem]{Corollary}
\theoremstyle{definition}
\newtheorem{definition}[theorem]{Definition}
\newtheorem{remark}[theorem]{Remark}
\newcommand{\nat}{\mathbb{N}}
\newcommand{\aaa}{\alpha}
\newcommand{\TIME}{\mathsf{DTIME}}
\newcommand{\BPP}{\mathsf{BPP}}
\newcommand{\EXP}{\mathsf{EXP}}
\renewcommand{\P}{\mathsf{P}}
\newcommand{\PSPACE}{\mathsf{PSPACE}}
\renewcommand{\uhr}{\upharpoonright}
\newcommand{\la}{\langle}
\newcommand{\ra}{\rangle}
\newcommand{\sN}[1]{_{#1\in \nat}}
\begin{document}

\title{Closure
of resource-bounded randomness notions under polynomial time permutations}

\author[1]{Andr\'e Nies}
\address[1]{Department of Computer Science,
The University of Auckland, Private Bag 92019, Auckland, New Zealand,
\texttt{andre@cs.auckland.ac.nz}}

\author[2]{Frank~Stephan}
\address[2]{Department of Mathematics and Department of
Computer Science, National University of Singapore,
10 Lower Kent Ridge Road, Block S17, Singapore 119076,
Republic of Singapore, \texttt{fstephan@comp.nus.edu.sg}}

\date{\today}

\subjclass{03D32,68Q30}
\keywords{Computational complexity, Randomness via resource-bounded betting strategies,
Martingales, Closure under permutations}

\maketitle

\begin{abstract} An infinite bit sequence is called recursively random if no computable   strategy betting along the sequence has unbounded capital.  It is well-known that the property of recursive randomness   is closed
under computable permutations. We investigate analogous statements
for  randomness notions defined by betting strategies that are computable within resource bounds. Suppose that $S$ is a polynomial
time computable permutation of the set of strings over the unary
alphabet (identified with $\nat$). If the inverse of $S$  is not polynomially
bounded, it is easy to build a polynomial time random bit sequence
$Z$ such that $Z \circ S$ is not polynomial time random. So one
should only consider permutations $S$ satisfying the extra condition
that the inverse is polynomially bounded. Now the closure depends on
additional assumptions in complexity theory.

Our first result, Theorem~\ref{thm:BPP}, shows that if $\BPP$
contains a superpolynomial deterministic time class, such as
$\TIME(n^{\log n})$,
then polynomial time randomness is not preserved by some permutation 
$S$ such that in fact both $S$ and its inverse are in $\P$. Our
second result, Theorem~\ref{thm:scan}, shows that
polynomial space  randomness is preserved by polynomial time permutations
with polynomially boun\-ded inverse, so if $\P = \PSPACE$ then
polynomial time randomness is preserved. 
\end{abstract}


\section{Introduction} 
\noindent
Formal randomness notions for infinite bit
sequences can be studied via algorithmic tests. A hierarchy of such
notions has been introduced. See e.g.\ Downey
and Hirschfeldt
\cite{Downey.Hirschfeldt:book} 
or Nies \cite[Ch.\ 3]{Nies:book} for
definitions and basic properties, and also Li and Vit\'anyi
\cite{Li.Vitanyi:book}. Criteria for good randomness
notions include robustness under certain computable operations on the
bit sequences. In the simplest case, such an operation is a computable
permutation of the bits. For a permutation $S$ of $\nat$ and an
infinite bit sequence $Z$, identified with a subset of $\nat$, by $Z
\circ S$ we denote the sequence $Y$ such that $Y(n) = Z(S(n))$.
(Note that, when viewed as a subset of $\nat$, $Z \circ S$ equals $S^{-1}(Z)$.)
We say that a class $\mathcal C$ of bit sequences is \emph{closed 
under all members of a class $\mathcal G$ of permutations} if $Z
\in \mathcal C$ implies $Z \circ S \in \mathcal C$ for each $S \in
\mathcal G$.

A central notion of randomness was introduced by 
Martin-L\"of~\cite{Martin-Lof:66}. A Martin-L\"of test is a uniformly
$\Sigma^0_1$ sequence $\la G_m
\ra\sN m$ such that the uniform measure of $G_m$ is at most $2^{-m}$.
$Z$ fails such a test if $Z \in \bigcap_m G_m$; otherwise $Z$ passes
the test. $Z$ is Martin-L\"of random if it passes each such test. 
Clearly this randomness notion is closed under computable
permutations $S$: if $Z \circ S$ fails a Martin-L\"of-test $\la G_m
\ra\sN m$, then $Z$ fails the test $\la S^{-1}(G_m)\ra\sN m$. The
weaker notion of Schnorr randomness \cite{Schnorr:75}, where one also
requires that the measure of $G_m$ is a computable real uniformly in
$m$, is closed under computable permutations
by a similar argument. Recursive randomness \cite{Schnorr:75} (see
e.g.\ \cite[Ch.\ 7]{Nies:book} as a recent reference)
is defined via failure of all computable betting strategies (martingales),
rather than by a variant of Martin-L\"of's test notion.
Nonetheless, by a more involved argument, implicit in \cite[Section
4.1]{Buhrman.etal:00}, it is closed under
computable permutations. Also see Nies~\cite[Thm.\ 7.6.24]{Nies:book}
and Kjos-Hanssen, Nguyen and Rute~\cite{Kjos-Hanssen.Nguyen.Rute:14}.

Our main purpose is to study analogs in computational complexity
theory of this result. In order to guarantee compatibility with the
theory developed in Downey and Hirschfeldt~\cite{Downey.Hirschfeldt:book} and Nies~\cite{Nies:book} we view
  sets of numbers (i.e., infinite bit sequences), rather than sets of strings over an alphabet of size at least 2, as our principal
objects of study. 
We note that work of Lutz, Mayordomo, Ambos-Spies and others,
beginning in the 1980s and surveyed in Ambos-Spies and Mayordomo
\cite{Ambos.Mayordomo:97}, studied sets of strings: martingales bet on
the strings in length-lexicographical order. Such languages can be
identified with bit sequences via this order of strings, but the time
bounds imposed on martingales are   exponentially larger when
they bet on strings.

To be able to apply the notions of resource bounded computability to
bit sequences and permutations,
we will  identify infinite bit sequences with subsets of the set $\{0\}^*$
of unary strings. Such sets are called tally languages. We view
permutations as acting on $\{0\}^*$.
A bit sequence is \emph{polynomial
time random} if no polynomial time computable bettings strategy
succeeds on the sequence. This notion was briefly introduced by Schnorr~\cite{Schnorr:75},
studied implicitly in the above-mentioned work of Lutz, Mayordomo, Ambos-Spies and others, and in more explicit
form in Yongge Wang's 1996 thesis \cite{Wang:96}. 

Our leading question
is: \emph{under which polynomial time
computable permutations $S$ is polynomial time randomness closed?}
If $S^{-1}$ is not polynomially bounded, we build a 
polynomial time random bit sequence $Z$ such that $Z \circ S$ is not
polynomial time random. After that we assume that $S$ satisfies 
the extra condition that its inverse is polynomially bounded. Now
the closure depends on additional assumptions in complexity theory:
\begin{itemize}
\item The first result, Theorem~\ref{thm:BPP}, shows
that if $\BPP$
contains a superpolynomial deterministic time class, such as
$\TIME(n^{\log n})$,
then polynomial time randomness is not preserved by some
permutation $S$ such that both $S$ and its inverse are in $\P$. 

\item The second result, Theorem~\ref{thm:scan}, shows that
$\PSPACE$-ran\-dom\-ness is preserved by polynomial time permutations
with polynomially bounded inverse; so if $\P = \PSPACE$ then
polynomial time randomness is preserved by such permutations.
\end{itemize}
Broadly speaking, the idea for Theorem~\ref{thm:BPP}  is as follows. Choose an  $O(n^{\log n})$ time computable martingale $M$ only betting on odd positions  $1,3,5, \ldots$  that  dominates   (up to a positive factor) all polynomial time computable  martingales that only bets on odd  positions.   Use the hypothesis in order to take a language $A \in \BPP$    which tells at which extension  of a string of odd length $M$ does not increase.  Now let $B$ be a highly random set (albeit $B$ can be chosen in $E$). Let $Z$ be the bit sequence that copies $B(n)$ at position $2n$, and takes the value of $A$ at the string $Z(0)\ldots Z(2n)$ at position $2n+1$. Then one can verify that  $Z$ is polynomial time random. If  $\widehat Z$ is a  rearrangement of  the bits of $Z$ so that a sufficiently large block of  bits of $B$ is  interspersed between bits determined by $A$, then we can use these bits of $B$ as random bits required in a randomised polynomial time algorithm for $A$. This will show that $\widehat Z$ is not polynomial time random. 

Theorem~\ref{thm:scan} closely follows
Buhrman, van Melkebeek, Regan, Sivakumar and Strauss
\cite[Section 4.1]{Buhrman.etal:00}, which introduces and studies resource-bounded betting games. It actually shows that
$\PSPACE$-randomness is
closed under certain polynomial time scanning functions, which, unlike
permutations, can uncover the bits of a set in an order determined by
previous bits. Each permutation in question can be seen as a scanning
function of the appropriate kind. (We note that removing the  resource bounds from  Theorem~\ref{thm:scan} 
 yields a proof that recursive randomness is
closed under computable permutations, and in fact under computable scanning functions that scan each position.)
Thm.\ 5.6 in Buhrman et al.\ \cite{Buhrman.etal:00} is a related result based on the same methods developed there; however, in that result an assumption on the existence of certain pseudorandom generators is made, while our   Theorem~\ref{thm:scan} does not rest on any unproven assumptions. 

We note another notion of robustness for randomness notions. One can
easily adapt all the randomness notions to an alphabet
other than $\{0,1\}$. Base invariance says that the notion is
preserved when one replaces a sequence over one alphabet by a sequence
in a different alphabet that
denotes the same real number. Brattka, Miller and Nies
\cite{Brattka.Miller.ea:16} have shown this for recursive randomness,
and Figueira and Nies \cite{Figueira.Nies:13}
have shown it for polynomial time randomness, each time relying on the
connection of randomness of a real with differentiability at the real
of certain effective functions. 

Using Figueira and Nies \cite{Figueira.Nies:13}, 
Nies \cite{Nies:14} provides a characterisation of polynomial time
randomness for real numbers in terms of differentiability of all
polynomial time computable nondecreasing functions on the reals.

\section{Preliminaries} \label{s:prelim}
For a bound $h$, as usual $\TIME(h)$ denotes the languages $A$
computable in time $O(h)$. Informally we often say that $A$ is
computable in time $h$.  As in Ambos-Spies and Mayordomo~\cite{Ambos.Mayordomo:97}, we require that martingales have rational values.
\begin{definition} \label{df:mg}
A martingale $M$ is a function from $\{0,1\}^*$ to
$\{q \in {\mathbb Q}: q > 0\}$ satisfying $M(x) = (M(x0)+M(x1))/2$
for all $x \in \{0,1\}^*$. A martingale succeeds on a set $Z$
if $\limsup_n M(Z\uhr n)=\infty$. One says that a martingale
\emph{does not bet at
a position $n$} if $M(x0) = M(x1)$ for each $x \in \{0,1\}^n$.
\end{definition}

\noindent
One says that $Z$ is recursively random if no computable martingale
succeeds on $Z$.  

Each polynomial in this paper will be non-constant and have
natural number coefficients. For a polynomial time version of
recursive randomness, we have
to be careful how to define polynomial time computability for a
martingale: as in \cite{Buhrman.etal:00}, a positive rational number $q$ is presented by a pair $\la
k, n \ra$ of
denominator and numerator (written in binary) such that $q = k/n$ in
lowest terms. A martingale $M$ is
polynomial time computable if on input $x$ one can determine $M(x)$ in
this format in polynomial time. $Z$ is polynomial time random if no
such martingale succeeds on $Z$. In a similar way one defines
exponential time randomness.

A martingale $M$ is polynomial space computable if $M(x)$ can be
computed in polynomial space (including the space needed to write
the output). $Z$ is polynomial space random if no
such martingale succeeds on $Z$.

We first show that polynomial time randomness fails to be closed under 
polynomial time computable permutations $S$ that are ``dishonest'' in
the sense that $S(n)$ can be much less than $n$. 

\begin{theorem} \label{prop:S}
Let $S$ be a polynomial time
permutation of $\{0\}^*$ such that for each polynomial $p$, there are
infinitely many $n$ with $p(S(n)) \le n$. There is a polynomial time
random $Z$ computable in time $2^{O(n)}$ such that $Z \circ S$ is
not polynomial time random.
\end{theorem}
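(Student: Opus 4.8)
The plan is to exploit a time asymmetry between the two orders in which the bits occur. When a martingale bets along $Y=Z\circ S$ and is about to bet on $Y(n)=Z(S(n))$, it has already read $Y(0),\dots,Y(n-1)$, i.e.\ it knows $Z$ on $\{S(0),\dots,S(n-1)\}$; and since $S\in\P$, in time polynomial in $n$ it can recompute all of $S(0),\dots,S(n)$ and so recover exactly which bits of $Z$ it has seen. A martingale betting along $Z$ itself, about to bet on $Z(m)$, has only time polynomial in $m$. First I would use the hypothesis to pick recursively an increasing sequence of \emph{target} values $m_0<m_1<\cdots$ with $m_i\ge 2^i$ and $S^{-1}(m_i)\ge m_i^{\,i}$; these exist because applying the hypothesis to $p(x)=x^i$ yields infinitely many $n$ with $S(n)^i\le n$, i.e.\ infinitely many $m=S(n)$ with $S^{-1}(m)\ge m^i$. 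The point is that $n_i:=S^{-1}(m_i)$ is super-polynomially larger than $m_i$, with the degree $i$ growing, so the martingale on $Y$, running at the large position $n_i$, is vastly more powerful than one on $Z$ running at the small position $m_i$.

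Next I would fix a polynomial time random set $R$ computable in time $2^{O(n)}$ (these exist by the usual diagonalisation against all clocked polynomial time martingales) and define $Z$ to agree with $R$ everywhere except that $Z(m_i):=Z(S(0))$ for every target $m_i$. The source bit $Z(S(0))=Y(0)$ is read at the very first step of the $Y$-ordering, hence is available long before the bet on $Y(n_i)$. To keep $Z$ computable in time $2^{O(n)}$ I would never compute the (possibly enormous) value $S^{-1}(m)$: membership of $x$ in the target set is decided by the same recursive selection, whose only nontrivial test, ``$S^{-1}(m)\ge m^i$'', is verified cheaply as ``$m\notin S(\{0,\dots,m^i\})$''. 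Since $m_i\ge 2^i$ forces $i\le\log_2 m_i$ and hence $m_i^{\,i}\le 2^{O(m_i)}$, each such certificate is checkable in time $2^{O(x)}$, and deciding $x\in Z$ only ever scans candidates up to $x$.

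To show $Y$ is not polynomial time random I would exhibit an explicit martingale $N$. Reading $Y\uhr n$, it recomputes $S$ on $\{0,\dots,n\}$ (time polynomial in $n$), runs the selection to decide whether $n=n_i$ for some target $m_i=S(n)$, and, if so, bets half of its capital that $Y(n)$ equals the already-seen bit $Y(0)=Z(S(0))=Z(m_i)$, so that the correct prediction multiplies its capital by $3/2$. Since there are infinitely many targets, $N$ succeeds. The crucial check is that $N$ runs in polynomial time: for a genuine target the selection only evaluates $S$ on inputs $\le m_i^{\,i}\le n_i$, so all of $N$'s work stays polynomial in $n$.

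The hard part is proving that $Z$ is polynomial time random; this is where the asymmetry must be cashed in. The intended argument is that a polynomial time martingale $M$ of running time $m^d$ can certify target membership only for the finitely many targets of rank at most $d$ (certifying $m_i$ with $i>d$ provably costs more than $m_i^{\,d}$), and modifying $R$ at finitely many places cannot create or destroy success. The remaining, delicate point is that at the high-rank targets $M$ cannot recognise that it is sitting on a planted bit, and these positions are moreover extremely sparse, so that $M$'s play there is governed by $R$'s randomness just as elsewhere; turning this obliviousness into the conclusion that $M$ would then also succeed on $R$, a contradiction, is the main obstacle. I expect to handle it either by a direct reduction producing from $M$ a martingale succeeding on $R$, or by building $Z$ through an exponential-time diagonalisation against the weighted sum $\sum_e 2^{-e}M_e$ of all clocked polynomial time martingales, choosing the capital-nonincreasing bit at every non-target position and bounding the total gain forced at the sparse target positions—the balancing of that forced gain against the penalties accrued at the abundant non-target positions being the technical heart.
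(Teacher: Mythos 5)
You have the right high-level idea---exploit positions $n$ where $S(n)$ is super-polynomially smaller than $n$, so that a martingale betting along $Z\circ S$ has far more time available at position $n$ than any martingale betting along $Z$ has at position $S(n)$---and your treatment of the ``$Z\circ S$ is not polynomial time random'' direction is fine in spirit. But there is a genuine gap exactly where you flag one: you never prove that $Z$ is polynomial time random, and neither of your two sketched routes closes the hole. For the first route, a polynomial time martingale does not need to \emph{certify} that $m$ is a target in order to profit from the planted bits: for each fixed $d$ it can decide, in time $m^{O(d)}$, membership in the superset $T_d=\{m: m\notin S(\{0,\dots,m^d\})\}$ of all targets of rank at least $d+1$, and bet on the value $b=Z(S(0))$ at every $m\in T_d$. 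Whether this succeeds depends on how dense the genuine targets are inside $T_d$, which your construction does not control; so the ``obliviousness'' of $M$ to the exact target set does not by itself yield a reduction to a martingale succeeding on $R$. For the second route, forcing a predetermined bit at infinitely many target positions on top of a leftmost non-ascending path cannot work: the weighted sum $L=\sum_e 2^{-e}M_e$ can at worst double at each forced position and is non-increasing elsewhere, so the only bound available is $M_e(Z\uhr N)\le 2^{e}\cdot 2^{k(N)}\cdot L(\la\ra)$ where $k(N)$ is the number of targets below $N$; since $k(N)\to\infty$ this bounds nothing, and an adversarial $M_e$ that stakes its capital on $b$ along some $T_d$ can realise the growth.

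The paper resolves this differently, and the difference is essential: no bit value is ever planted. There $Z$ is the leftmost non-ascending path of $L=\sum_r 2^{-r}B_{r,n_r}$, where $B_{r,n_r}$ is the $r$-th polynomial time martingale delayed until input length $n_r$, and the delays are chosen inductively so that $q_k(S(n_{k+1})+1)\le n_{k+1}$ for a polynomial time bound $q_k$ of the partial sum $\sum_{r\le k}2^{-r}B_{r,n_r}$. Then $Z$ is polynomial time random for the standard reason ($L$ never increases along $Z$ and multiplicatively dominates every $B_k$), while at the special positions $n=n_{k+1}$ the bit $Z(S(n))$---which is the non-ascending choice itself, not a planted constant---can be recomputed from scratch in time polynomial in $n$, because only the first $k+1$ summands act below length $n_{k+1}$ and their time bound evaluated at $S(n)+1$ is at most $n$. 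I recommend you restructure your argument around this coordination of the activation delays with the positions witnessing the hypothesis on $S$, rather than around modifying an independently chosen polynomial time random $R$ at sparse positions.
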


\noindent
Clearly a permutation $S$ as in Proposition~\ref{prop:S} exists: Let
$(p_k)$ list the non-constant polynomials with natural coefficients in
such a way that for $u \le n$, $O(n^2)$ steps suffice to verify
whether $p_k(u) \le n$.
On input $n$ of the form $\la k,i \ra$, see whether $p_k(\la k,0
\ra) \le n$. If not let $S(n) = \la k,i+1 \ra$. If so and $n$ is
least such, let $S(n) = \la k,0 \ra$. Otherwise $S(n) = n$. 

\begin{proof}[Proof of Proposition~\ref{prop:S}] Nies \cite[Section
7.4]{Nies:book} provided a construction template for recursively
random sets, going back to Schnorr's work. We adapt some
parts of this template to the resource bounded setting.

Let $\la B_k \ra$ be an effective listing of the polynomial time
martingales with positive rational values. We may assume that $B_k$
is computable in time $p_k(n)= k (n^k+1)$.

For each $n$, let $B_{k,n}$ be the martingale with initial capital
$1$ that does not bet until its input reaches length $n$, and then
uses the same betting factors as $B_k$. Thus, \[B_{k,n}(x) =
\frac{B_k(x)}{B_k(x \uhr n)}\] for any string $x$ of length at least $n$.
Let $\widetilde p_{k,n}$ be a polynomial so that $B_{k,n}(x)$ for $|x|
\ge n$ can be computed in time $\widetilde p_{k,n}(|x|)$.

We inductively define a sequence of numbers. Let $n_0 = 0$, and let 
$n_{k+1} $ be the least $n> n_k$ such that $q_k(S(n)+1) \le n$,
where $q_k$ is a polynomial time bound for the martingale $\sum_{r\le
k} 2^{-r} B_{r, n_{r}}$ and $q_k(n) \ge n+2$. 
Let $L= \sum_r 2^{-r} B_{r, n_{r}}$. Note that $L$ is a
rational-valued martingale, because on inputs of length at most
$n_{k}$, all the $B_{r, n_r}$ for $r > k$ together contribute $2^{-k}$.

Let now $Z$ be the left-most non-ascending path of $L$: $Z(m) = 0$ if
$L(Z \uhr m \hat {\ } 0) \le L(Z \uhr m)$, and $Z(m) = 1$ otherwise.
Since $L$ does not succeed on $Z$ and $L$ multiplicatively dominates
each $B_k$, the set $Z$ is polynomial time random.

Note that since $S \in \P$, from $n$ we can in polynomial time
recursively recover the sequence $n_0, q_0, n_1, q_1, \ldots$ and
thereby compute the maximal $k$ such that $n_k < n$. In particular we
can decide whether $n$ is of the form $n_{k+1}$ for some $k$. By
definition, for $n= n_{k+1}$ we have $q_k(S(n) +1) \le n$ and hence
$S(n) +1 < n_{k+1}$. 
Since $q_k $ as a time bound is sufficient to determine $L(y)$ for
strings $y$ of length $S(n)+1$, the bit $Z \circ S(n)$ can be
computed in time polynomial in $n$. Hence $Z \circ S$ is not
polynomially random.

We can ensure such a set $Z$ is computable in time $2^{O(n)}$ by choosing the
listing $\la B_k \ra$ appropriately. 
\end{proof}

\begin{remark} We note that methods involving the $\la B_{k,n} \ra$
similar to the above can be used to show that each class $\TIME(h)$
with superpolynomial time constructible $h$ contains a polynomial time
random (tally) set. We have to initiate a copy $\la B_{k,n} \ra$ of
$B_k$ finitely many times until a length $n$ is reached such that
for $m \ge n$, $h(m)$ time is sufficient to simulate its behaviour on
strings of length $m$. 
\end{remark}


\section{If $\BPP$ Contains a Superpolynomial Time Class Then Closure Fails}

\begin{definition} \label{def:FPC} A permutation $S$ of $\{0\}^*$ is
called  \emph{fully polynomial time computable} if both $S$ and $S^{-1}$ are
polynomial time computable.
\end{definition}

\noindent

A complexity theoretic assumption considerably  weaker than $\BPP =
\EXP$ suffices for non-closure.  


\begin{theorem} \label{thm:BPP}
Suppose that $\TIME(h) \subseteq \BPP$ for some time constructible
function $h$ that dominates all the polynomials. Then there are a 
polynomial time random set $Z \in \TIME(2^{3n})$
and a fully polynomial time computable permutation
$S$ such that $Z \circ S$ is not polynomial time random.
\end{theorem}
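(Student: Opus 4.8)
The authors' own sketch in the introduction lays out the architecture, so my proof proposal follows that blueprint. First I would construct a "master" martingale $M$ that bets only on the odd positions $1,3,5,\ldots$ and runs in time $O(n^{\log n})$ (or more precisely in time $h$), designed so that it multiplicatively dominates every polynomial time martingale that bets only on odd positions. The standard device for this is the weighted sum $\sum_k 2^{-k} B_{k,n_k}$ from the proof of Proposition~\ref{prop:S}, restricted to odd positions, where the superpolynomial time budget $h$ lets me run each $B_k$ without the delayed-start truncation that was forced in the polynomial case. The key point is that $M$ is computable in $\TIME(h)$.

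Next I would encode, into a single language, the combinatorial information ``in which direction does $M$ \emph{not} increase''. Concretely, for a string $w$ of odd length I record the bit telling whether $M(w\hat{\ }0)\le M(w)$ or not; call the resulting language $A$. Since $M\in\TIME(h)\subseteq\BPP$, this language $A$ lies in $\BPP$. Now I would fix a set $B\in\EXP$ (in fact in $E$) that is sufficiently random — for instance polynomial time random, or passing the relevant exponential-time tests — and define $Z$ by interleaving: at even position $2n$ put $B(n)$, and at odd position $2n+1$ put the value of $A$ on the prefix $Z(0)\cdots Z(2n)$ already built. Because $A$ encodes the non-ascending direction of the dominating martingale $M$, the set $Z$ defeats every polynomial time martingale that bets on odd positions, while the $B$-bits at even positions defeat every polynomial time martingale betting there; combining these two facts (a position-splitting argument on an arbitrary polynomial time martingale) yields that $Z$ is polynomial time random, and the interleaving keeps $Z\in\TIME(2^{3n})$.

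Finally I would define the fully polynomial time computable permutation $S$ that \emph{spreads out} the bits: $\widehat Z = Z\circ S$ should place, before each $A$-determined bit, a long block of $B$-bits — long enough to serve as the random coins for a $\BPP$ algorithm deciding $A$. A martingale reading $\widehat Z$ can then, upon reaching such a block, read off the $B$-bits as random coins, run the randomised polynomial time algorithm for $A$ on the relevant prefix, predict the next $A$-bit correctly with probability bounded away from $1/2$, and bet accordingly; amortised over infinitely many blocks this martingale succeeds. Hence $\widehat Z = Z\circ S$ is \emph{not} polynomial time random, as required. Both $S$ and $S^{-1}$ are arranged to be in $\P$ by making the block-spacing schedule polynomial time computable and polynomially bounded in both directions.

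**The main obstacle** I anticipate is verifying that $Z$ itself is polynomial time random despite its odd bits being (by design) predictable by the \emph{superpolynomial} martingale $M$. The delicate point is that a polynomial time adversary martingale $N$ cannot be assumed to bet only on odd positions; it may interleave bets across parities and correlate them. The argument must show that any polynomial time $N$ can be decomposed, or dominated by a product of two martingales, one betting only on even positions and one only on odd, so that the randomness of $B$ handles the even component and the multiplicative domination of $M$ over the odd component handles the odd one — and crucially that the odd component, being polynomial time, is genuinely dominated by $M$ even though $A$ (and hence the odd bits of $Z$) is defined using the \emph{direction} in which $M$ fails to grow rather than $M$'s values directly. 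Making the probabilistic amplification in the last step uniform enough that a single polynomial time martingale succeeds along $\widehat Z$, rather than merely succeeding on average, is the second place where care is needed.
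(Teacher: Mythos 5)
Your proposal follows the paper's proof essentially step for step: the same dominating odd-position martingale $M$ in $\TIME(h)$, the same language $A$ recording in which direction $M$ does not increase, the same interleaving of $A$-bits with bits of an exponential-time computable random set $B$, and the same block-spreading permutation that feeds $B$-bits as coins to the $\BPP$ algorithm for $A$; the decomposition into odd-betting and even-betting martingales that you flag as the main obstacle is exactly the ``well-known'' splitting the paper invokes. The one caveat is that $B$ must be random against martingales running in time about $2^{O(n)}$ (as the paper arranges), not merely polynomial time random, since both the derived even-position martingale (which must evaluate $A$ in time $n^{O(\log n)}$) and the martingale certifying that $B$'s blocks are good coins run in superpolynomial time --- so of your two suggested choices for $B$, only the ``exponential-time tests'' one works.
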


\begin{proof}
We may assume that $h(n) \le n^{\log n}$.
It is well-known that whenever a martingale
in a certain complexity class succeeds on a set $Z$ then there
is also a successful martingale in the same class betting only on even
positions, 
or there is a successful martingale betting only on the odd positions.

The construction has two steps. Firstly, by standard methods discussed
at the end of Section~\ref{s:prelim}, one can build a martingale $M$ in
$\TIME(h)$ which bets only on odd positions, and   dominates up to a
multiplicative constant all
polynomial time martingales betting on odd positions. Let
$$
   A = \{x \in \{0,1\}^*: x \text{ has odd length and } M(x1) < M(x0)\}.
$$
The set $A$ is in $\TIME(h)$ and hence by assumption in $\BPP$.

Secondly, let $B \subseteq \{0\}^*$ be a language on which no 
martingale in $\TIME(2^{4 \cdot n})$ succeeds.
Again by standard methods one can ensure that $B$ is in $\TIME(2^{5 \cdot n})$.
Define a set $Z \subseteq \mathbb N$ as follows:
$$
   Z(2n) = B(n); \ Z(2n+1) = A(Z \uhr 2n+1).
$$
\noindent We may visualise $Z$ as follows:

\medskip

 $  \begin{tabular}{|c|c|c|c|c|c|c|c|c| c|c|c|c| c}
\hline 

B &  {A} &   B  &   {A} & B  &  {A} & B&   {A}  &B  &   {A} & B &  {A}
 & B & \ldots \\

\hline

\end{tabular}$ 
\smallskip

  {\tiny $  B(0)  \,   { A(Z\uhr 1)}  \,  B(1 ) \,   {A (Z\uhr 3)}
\, B(2)  \,   {A (Z \uhr 5)}  \ldots$ }   

\medskip
\noindent
Clearly $Z \in \TIME(2^{3n})$. It is claimed that $Z$ is polynomial time random.
As the martingale $M$ only bets on odd positions, $Z$ is defined such
that $M$ never gains capital on $Z$. As $M$ is universal among the
martingales computable in polynomial time with this property, no
martingale betting on the odd positions succeeds on $Z$.

Suppose now that $L$ is a polynomial time martingale which bets on the even
positions and note that one can compute in time $O(h(n))$
from $B(0),B(1),\ldots,B(n)$ inductively the values $Z(0),Z(1),\ldots,Z(2n+1)$,
as for every $x$ of length $2n+1$ the value $A(x)$ can be computed in
time $h(n)$. Thus if $L$ succeeds on $Z$ then there is a new martingale $N$
succeeding on $B$ which satisfies that
$$
   N(B\uhr n+1) = L(Z\uhr 2n)
$$
and which uses that $Z(2n) = B(n)$ while the bits of $Z$ at odd positions
on which $L$ does not bet can be computed as indicated above from the
other bits. To compute $N(x)$ for $x$ of length $2n$ takes $q(n) +
\sum_{i< n} h(2i+1)$ steps for some polynomial $q$. So $N \in \TIME
(n^{O(\log n)})$, which contradicts the assumption that no such
martingale computable in time $O(2^{4n})$ succeeds on $B$. This
verifies the claim.

Since $A \in \BPP$, there is a polynomial $p$ such that
an appropriate randomised algorithm $\mathcal R$ on input $x \in
\{0,1\}^{2n+1}$ computes $A(x)$ in time $p(n)$,
with error probability $2^{-4n-2}$, using $p(n)$ random bits.
Now consider the sequence $\widehat Z$ consisting for $n=0,1,\ldots$
of $p(n)$ bits taken from $B$ followed by the bit $Z(2n+1)$. Again we
visualise $\widehat Z$:

\medskip

$ \begin{tabular}{|c|c|c|c|c|c|c|c|c| c|c|c|c| c}
\hline 

B &  {A} & B & B  & B &   {A}  &B  & B& B  &B  & B& B &  {A}  & \ldots \\

\hline

\end{tabular}$  

\smallskip

   $p(0)$ \hspace{1.1cm}  $p(1)$ \hspace {3cm} $p(2)$

\medskip
\noindent Formally one can define $\widehat Z$ from $Z$ as follows:
\begin{eqnarray*}
  \mbox{ for $m< p(n)$,} \hspace{1.5cm} \mbox{ } & & \\
  \widehat Z((\sum_{k<n} p(k))+n+m) & = &  B((\sum_{k<n} p(k))+m) \ = \ 
       Z(2(\sum_{k<n} p(k)+m)); \\
  \widehat Z((\sum_{k\leq n} p(k))+n) & = & Z(2n+1)= A( Z\uhr {2n+1}).
\end{eqnarray*}
This mapping is given by a permutation $S$ so that
$\widehat Z(r) = Z(S(r))$ for all positions $r$.
So if $r = (\sum_{k<n} p(k))+n+m$ then $S(r) = 2(\sum_{k<n} p(k))+2m$
and if $r = (\sum_{k\leq n} p(k))+n$ then $S(r) = 2n+1$, for all $m,n$
with $m < p(n)$. The permutation $S$ and its inverse satisfy that the
mappings $0^k \mapsto 0^{S(k)}$ and $0^k \mapsto 0^{S^{-1}(k)}$
on the unary strings $\{0\}^*$ are polynomial time computable, thus
the $S$ is of the form as required; to see this note that for a polynomial
$p$ also the mapping $n \mapsto \sum_{k<n} p(k)$ is a polynomial; similarly
for a function bounded by a polynomial.

Now it will be shown that $\widehat Z$ is not polynomial time random.
Note that there are $2^{2n+1}$ strings of length $2n+1$. Given a string
of $p(n)$ random bits, the probability that when using these bits the randomised
algorithm $\mathcal R$ computes $A(x)$ correctly
for all $x \in \{0,1\}^{2n+1}$ is at least $1-2^{2n+1} \cdot 2^{-4n-2}
= 1-2^{-2n-1}$.
We want to show that $B$ provides random bits that allow $\mathcal R$
to correctly compute $A$ for almost all inputs. Otherwise, we can
build a martingale $M$ computable in time $2^{10 \cdot n}$
which succeeds on $B$:
The martingale $M$ splits its
capital into bins of value $2^{-n-1}$ and for each block
of $p(n)$ bits starting at $\sum_{k < n} p(k)$, it takes the value
$2^{-n-1}$ from the corresponding bin and bets it on the strings $y$
consisting of 
$p(n)$ bits that do not compute all values of $A(x)$ with $x \in
\{0,1\}^{2n+1}$
correctly using $\mathcal R$. This condition can be checked for these bits
in the time bound given as it involves running $\mathcal R$ with $y$
as the random bits
on all strings $x$ of length $2n+1$ and comparing the result with $A(x)$ for all
$2^{p(n)}$ choices of random bits $y$. After these simulations, $M$
distributes the capital from the bin evenly on those
strings of random bits which cause $\mathcal R$ to make an error.
After having processed the
bits from the block of $p(n)$ bits, the capital in this bin remains
unchanged by future
bets. The set of random strings $y$ on which the computation of some of the
$A(x)$ in $x \in \{0,1\}^{2n+1}$ is false has at most
the probability $2^{-4n-2} \cdot 2^{2n+1} = 2^{-2n-1}$. Therefore
the capital from the bin multiplies at least by $2^{n+1}$ during the block and
reaches the value $1$.

For the time bound on $M$, whenever the input has length between
$\sum_{k<n} p(k)$ and $\sum_{k \leq n} p(k)$,
the martingale computes $2^{n+1}$ values $A(x)$ for
$x \in \{0,1\}^{2n+1}$ with respect to $p(n)$ random bits taking $2^{p(n)}$
possible choices. However, for all polynomials and almost all $n$,
$p(n) \leq \sum_{k < n} p(k)$, as the degree of the sum-polynomial of
$p$ is by one above the degree of $p$ and the polynomial $p$ is
positive. Thus, for such $n$, when $n' = \sum_{k<n} p(k)$ is a lower
bound on the
length of the input to the martingale $M$ then $p(n) \leq n'$ and
$2n+1 \leq n'$ and thus the whole computations can be handled in time
$O(2^{3n'})$.

If there are infinitely many blocks in $B$ where the random bits of this
block do not compute all $A(x)$ with $x$ of the corresponding length correctly,
then this martingale succeeds, contrary to the assumption on $B$.
So, for almost all $n$, the block of $p(n)$ random bits in $\widehat Z$ before
$A(Z \uhr {2n})$ permits to compute this value correctly.

Now this property will be used to
construct a polynomial time martingale $H$ which succeeds on $\widehat Z$. Let
$\tilde A(n)$ denote $A( Z\uhr {2n+1})$. Given $p(n)$ random bits
from $B$ preceding $\tilde A(n)$ in $\widehat Z$, the martingale $H$ archives
these bits without
betting on them. It then bets half of its capital on the value for
$\tilde A(n)$ computed from these random bits; note that due to
$\tilde A(0),\tilde A(1),\ldots,\tilde A(n-1)$ and $B(0),B(1),\ldots,B(n)$
being coded in $\widehat Z$ in positions before that of $\tilde A(n)$, when the
bet for $\tilde A(n) = Z(2n+1)$ has to be made, one can
retrieve besides the random bits also $Z(0)Z(1)\ldots Z(2n)$ from the
history. So one can use the random bits to compute the value almost
always correctly.
Thus the martingale $H$ will only finitely often place a wrong bet and lose
some of its capital, but for almost all $\tilde A(n)$ predict the value
correctly and multiply its capital by $3/2$. Thus the martingale
succeeds. As all the operations above are polynomial time computable, 
the set $\widehat Z$ is not polynomial time random.
\end{proof} 

\noindent
The proof of Theorem~\ref{thm:BPP} can be adjusted to obtain a corollary.

\begin{corollary} \label{cor:BPP}
Let $A,B \subseteq \{0\}^*$. Suppose that $A$ is in $\BPP$
and $B$ is $\EXP$-random relative to $A$.
Then $A$ is polynomial time computable relative to $B$,
and in particular not polynomial time random relative to $B$.
\end{corollary}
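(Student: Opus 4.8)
The plan is to relativize the argument of Theorem~\ref{thm:BPP} to the oracle $A$, but to strip away the part that constructs a complicated permutation $S$: for the corollary we only need to show that $A$ is polynomial time computable from $B$, so it suffices to run the ``random bits from $B$ drive a $\BPP$-algorithm for $A$'' idea directly. First I would unpack the hypotheses. Since $A \in \BPP$, there is a polynomial $p$ and a randomized algorithm $\mathcal R$ that on input $x \in \{0,1\}^{2n+1}$ computes $A(x)$ in time $p(n)$ using $p(n)$ random bits with error probability at most $2^{-4n-2}$; this is exactly the setup used in the proof above. The set $B$ is now assumed to be $\EXP$-random relative to $A$, which is a stronger assumption than the $\TIME(2^{4n})$-randomness of $B$ used in the theorem, so every martingale construction below may freely query $A$ as an oracle while remaining within the allowed resource bound.

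The key step is to package the blocks of $B$ so that the $r$-th block of $p(r)$ consecutive bits of $B$ supplies the random bits feeding $\mathcal R$. Precisely, I would set the block boundaries at $\sum_{k<n} p(k)$ and read the block $y_n$ of length $p(n)$ as a candidate random string for computing $A$ on all $x \in \{0,1\}^{2n+1}$. The union bound gives that the probability (over a uniformly random $y_n$) that $\mathcal R$ errs on some such $x$ is at most $2^{2n+1}\cdot 2^{-4n-2} = 2^{-2n-1}$. I would then build a martingale $M$ relative to $A$, using the same bin construction as in the theorem: allot $2^{-n-1}$ from a dedicated bin to the $n$-th block, and bet that entire amount on the set of ``bad'' strings $y_n$ of length $p(n)$ on which $\mathcal R$ fails to match $A(x)$ for some $x$ of length $2n+1$. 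Checking badness of a given $y_n$ requires simulating $\mathcal R$ on all $2^{2n+1}$ inputs $x$ and comparing against $A(x)$ (a single oracle query each), and identifying all bad strings requires iterating over all $2^{p(n)}$ choices of $y_n$; since the bad set has measure at most $2^{-2n-1}$, redistributing the bin evenly over it multiplies the bin's capital by at least $2^{n}$, reaching capital exceeding $1$ from an initial $2^{-n-1}$. As in the theorem, the whole computation on inputs of length around $n' = \sum_{k<n}p(k)$ fits into $\EXP$ time relative to $A$, because $p(n) \le n'$ and $2n+1 \le n'$ for almost all $n$.

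If $B$ produced infinitely many bad blocks, this martingale $M$ would succeed on $B$, contradicting that $B$ is $\EXP$-random relative to $A$. Hence for almost every $n$ the block $y_n$ of $B$ is a \emph{good} seed: running $\mathcal R$ on $y_n$ correctly computes $A(x)$ for every $x$ of length $2n+1$. This is the engine for deciding $A$ from $B$. To compute $A(x)$ for a given $x \in \{0,1\}^{2n+1}$, a machine with oracle $B$ reads the block $y_n$ (locating its starting position $\sum_{k<n}p(k)$ is a polynomial-time arithmetic task), runs $\mathcal R$ on input $x$ using $y_n$ as its coin flips, and outputs the result; for all but finitely many lengths this is correct, and the finitely many exceptional lengths can be hard-wired into the machine. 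This runs in time polynomial in $|x|$ relative to $B$, so $A \le_{\mathrm{P}} B$, i.e.\ $A$ is polynomial time computable relative to $B$. The final clause is immediate: a set that is polynomial time computable relative to $B$ cannot be polynomial time random relative to $B$, since the trivial martingale that (relative to $B$) always bets its capital on the known next bit of $A$ succeeds.

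I expect the main obstacle to be bookkeeping rather than conceptual: one must verify that the relativized martingale $M$ stays honestly within $\EXP$ relative to $A$ while the decision procedure for $A$ stays within $\mathrm{P}$ relative to $B$, and in particular that the exponential cost of the martingale's exhaustive search over $2^{p(n)}$ seeds (needed for randomness of $B$) is entirely absent from the decision procedure (which reads only the \emph{one} seed handed to it by $B$). The asymmetry between these two resource bounds is exactly what makes the corollary work, so the care lies in keeping the two computations cleanly separated.
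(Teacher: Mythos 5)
Your proposal is correct and follows essentially the same route as the paper: use consecutive blocks of $B$ as random seeds for the $\BPP$-algorithm, rule out infinitely many bad blocks via an exponential-time martingale relative to $A$ (contradicting the randomness of $B$ relative to $A$), and then decide $A$ from $B$ in polynomial time by reading the single relevant seed, patching the finitely many errors with a table. The only difference is cosmetic: you carry over the union bound over all of $\{0,1\}^{2n+1}$ from the theorem, whereas the paper works directly with the tally input $0^n$ and error probability $2^{-n}$, which is slightly leaner but changes nothing essential.
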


\begin{proof}
For the ease of notation, we often write $A(n)$ in place of $A(0^n)$ and so on;
however, both $A$ and $B$ are viewed as subsets of $\{0\}^*$.

There is a polynomial time algorithm and a polynomial $p$ such that
the algorithm uses $p(n)$ random bits to compute $A(n)$
with error probability $2^{-n}$. As in the theorem above,
one can now query $B$ for getting the random bits and the places
where the queries are asked are different for $n,m$ whenever $n \neq m$.
So there is a polynomial $q$ with $q(n)+p(n) = q(n+1)$ for all $n$
and where the algorithm asks the bits of $B$ at
$q(n),q(n)+1,\ldots,q(n)+p(n)-1$ to compute $A(n)$.

If now there is an error, then an exponential time martingale relative
to $A$ can make sufficient profit, as only a slim minority of the possiblities
of the bits of $B$ from $q(n)$ to $q(n)+p(n)-1$ are realised. This
contradicts the assumption that $B$ is random relative to $A$.
Hence $A$ can be computed relative to $B$ by this algorithm with
only finitely many errors; these can then be corrected by a finite
table holding the correct values for the positions where the algorithm
makes an error.
\end{proof}

\begin{remark}
In the proof of Theorem~\ref{thm:BPP}, $Z = \tilde A \oplus B$
is polynomial time random; however, $\tilde A$ is not polynomial time
random relative to $B$, as the rearrangement with $S$ shows.
Note that van Lambalgen's Theorem
\cite{Lambalgen:90} says that in a recursion-theoretic setting, 
$\tilde A \oplus B$ is random iff (a) $B$ is random and
(b) $\tilde A$ is random relative to $B$.
Thus, under the assumption that $\BPP = \EXP$, one of the directions of
the van Lambalgen Theorem does
not hold for polynomial time randomness.

The corollary also shows that one can choose, under the assumption
that $\BPP $ contains a superpolynomial time class, sets $A,B
\subseteq \{0\}^*$ such that $A$ is
polynomial time random,
$B$ is polynomial time random relative to $A$ and $A$ is polynomial
time computable relative
to $B$. Hence this assumption implies that $A$ is a basis for 
polynomial time randomness
even though $A$ is polynomial time random itself. This contrasts with the
setting of Martin-L\"of randomness randomness in recursion theory: a basis for 
Martin-L\"of randomness has to
be trivial and therefore cannot be random
\cite{Hirschfeldt.Nies.Stephan:07,Nies:05}. On the other hand, the
bases for recursive randomness include every
set below the halting problem that is not diagonally noncomputable (DNC),
but no set of PA degree \cite{Hirschfeldt.Nies.Stephan:07}. Every high set computes a recursively random set, and an incomplete  high r.e.\ set is not DNC. So a recursively random set can   be a basis for recursive randomness. 
\end{remark}

\section{If $\P = \PSPACE$ Then Closure Holds}

\noindent
We say that $Z \subseteq \nat $ is \emph{polynomial space random} if no
  martingale computable in polynomial space  succeeds on $Z$. In this section we
show that polynomial space randomness is closed under fully polynomial time
computable permutations in the sense of Definition \ref{def:FPC}. If
$\P = \PSPACE$ this closure property applies to polynomial time
randomness as well.

In fact we show a stronger closure property where the permutations
are generalised to certain non-monotonic scanning rules (adaptively
specifying an order in which bits are read). We modify the argument given
by Buhrman, van Melkebeek, Regan, Sivakumar and Strauss
\cite[Section 4.1]{Buhrman.etal:00}, which was not concerned with
polynomial space randomness, but rather was geared to the context of Lutz's
theory of resource bounded measure. As already mentioned, in that
theory, the positions a
martingale bets on
are strings in some non-unary alphabet. 
Such strings can be suitably
encoded by natural numbers; however, the resource bounds change
when one converts such a martingale into one in the sense of our
Definition~\ref{df:mg}. 

\begin{definition} A \emph{scanning function} is a function $V \colon
\{0,1\}^* \to \{0\}^*$ such that $V(\alpha) \neq V(\alpha \uhr i)$ for
each $\alpha \in \{0,1\}^* $ and each $i < |\alpha|$. In the context of $V$,
we will call a string $\alpha$ a \emph{run of $V$}, thinking of
$\aaa$ as a sequence of answers to oracle queries. We will call
$V(\alpha \uhr i)$ the \emph{$i$-th query in the run of
$V$ on $\alpha$}. 

As before, subsets of $\nat$ will be identified with languages over
the unary alphabet~$\{0 \}$. For $Z \subseteq \nat$ let $Z \circ V
\subseteq \nat$ be the set $Y$ such that $Y(i) = Z(V (Y \uhr i))$
for each~$i$. In the following we review some key technical concepts
\cite[Section 4.1]{Buhrman.etal:00}, somewhat changing the
terminology in order to make it compatible with the one of Nies \cite[Section
7.5]{Nies:book} where non-monotonic randomness notions are studied.

For a function $g \colon \nat \to \nat$, one says that $V$ is
\emph{$g$-filling} if for each $n$ and each run $\alpha$ of length $g(n)$, 
\[ \forall r < n \, \exists i \, V(\alpha \uhr i) = r. \]
\end{definition}

\begin{definition}
A \emph{non-monotonic betting strategy} $G$ is
a pair $(V,B)$ such that $V$ is a scanning function and $B$ is a
martingale. $G$ succeeds on $Z\subseteq \{0\}^*$ if $\lim_n
B(Z \circ V\uhr n) = \infty$. 

One says that a non-monotonic betting strategy $G$ is computable in
polynomial space if both $V$ and $B$ are computable in polynomial
space. One says that $Z\subseteq \nat $ is \emph{non-monotonically
polynomial space random} if no such betting strategy succeeds on $Z$.
\end{definition}

\noindent
The final concept we need is that of consistency between a run
$\alpha$ of $V$ and a string~$w$.

\begin{definition}
For bit strings $\alpha, w$, we write $\alpha \sim_V w $ if for
each $j < |\alpha|$, if the $j$-th query $x$ in the run of $V$ on
$\alpha$ is less than $|w|$, then $w(x) = \alpha(j)$. 


\begin{lemma} Suppose $V$ is $g$-filling. Let $|\alpha| \ge i:=
g(|w|)$. Then $\alpha \sim_V w $ iff $\alpha \uhr i \sim_V w$.
\end{lemma}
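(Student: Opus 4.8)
The plan is to show that the consistency conditions defining $\alpha \sim_V w$ and $\alpha \uhr i \sim_V w$ agree on the positions $j < i$ and that all conditions with $i \le j < |\alpha|$ are vacuous, so the two statements are equivalent. First I would unfold the definition: $\alpha \sim_V w$ asserts that for every $j < |\alpha|$, whenever the $j$-th query $V(\alpha \uhr j)$ is $< |w|$ we have $w(V(\alpha \uhr j)) = \alpha(j)$, and $\alpha \uhr i \sim_V w$ asserts the same with $j$ ranging only over $j < i$. Since $(\alpha \uhr i)\uhr j = \alpha \uhr j$ for every $j < i$, the $j$-th query along the run on $\alpha \uhr i$ equals the $j$-th query along the run on $\alpha$ for all such $j$; hence the clauses indexed by $j < i$ are literally identical in the two statements. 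The forward direction is then immediate: if $\alpha \sim_V w$ then in particular every clause for $j < i$ holds (using $i \le |\alpha|$), so $\alpha \uhr i \sim_V w$.

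For the backward direction, the crux is that every query made at a step $j \ge i$ falls outside the domain of $w$, i.e. $V(\alpha \uhr j) \ge |w|$. Here I would combine both defining properties of $V$. Applying the scanning-function property to the prefixes of $\alpha$ shows that the queries $V(\alpha \uhr 0), V(\alpha \uhr 1), \ldots$ are pairwise distinct along the run: for $k_1 < k_2 \le |\alpha|$, the property applied to $\alpha \uhr k_2$ gives $V(\alpha \uhr k_2) \neq V((\alpha\uhr k_2)\uhr k_1) = V(\alpha \uhr k_1)$. Applying $g$-filling with $n = |w|$ to the run $\alpha \uhr i$ of length exactly $i = g(|w|)$ shows that the first $i$ queries $V(\alpha \uhr 0), \ldots, V(\alpha \uhr (i-1))$ already exhaust $\{0, 1, \ldots, |w|-1\}$. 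Combining these, any query $V(\alpha \uhr j)$ with $j \ge i$ is distinct from all earlier queries and therefore cannot lie in $\{0,\ldots,|w|-1\}$, so $V(\alpha\uhr j) \ge |w|$ and the consistency clause at such $j$ holds vacuously.

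Putting the two pieces together gives the backward direction: if $\alpha \uhr i \sim_V w$, then all clauses for $j < i$ hold by hypothesis and all clauses for $i \le j < |\alpha|$ hold vacuously, whence $\alpha \sim_V w$. I do not expect a genuine obstacle; the only point requiring care is the indexing bookkeeping, namely verifying that ``$i = g(|w|)$ queries'' refers precisely to steps $0, \ldots, i-1$ and that these are exactly the queries governed by the $g$-filling property. The conceptual heart of the argument is simply the interplay between \emph{distinctness} of the queries along a run (from $V$ being a scanning function) and their \emph{exhaustiveness} below $|w|$ after $g(|w|)$ steps (from $g$-filling), which together force every later query to exceed $|w|$.
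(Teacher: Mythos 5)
Your proof is correct and follows essentially the same route as the paper, which compresses the whole argument into one sentence: any query $q < |w|$ must already be asked before stage $g(|w|)$, so all clauses at positions $j \ge i$ are vacuous. You simply make explicit the two ingredients the paper leaves implicit, namely the pairwise distinctness of queries along a run and the exhaustiveness of queries below $|w|$ within the first $g(|w|)$ steps.
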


\noindent
To see this, note that any query $q$ with $q< |w|$ has to be
asked before stage $g(|w|)$ by the definition of the function $g$.
\end{definition}

\begin{theorem} \label{thm:scan}
Let $V$ be a scanning function in $\PSPACE$ that is $g$-filling for
a polynomial bound $g$. If $Z$ is polynomial space random, then so is
$Z \circ V$.
\end{theorem}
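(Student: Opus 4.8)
The plan is to argue by contraposition, following the method of Buhrman et al.: assuming an ordinary $\PSPACE$-computable martingale $N$ succeeds on $Y := Z \circ V$, I will build a $\PSPACE$-computable martingale $M$ succeeding on $Z$ itself, contradicting the $\PSPACE$-randomness of $Z$. Note that $(V,N)$ is exactly a non-monotonic betting strategy succeeding on $Z$, so the content of the theorem is that a $g$-filling strategy of this form can be simulated by an ordinary martingale within the same space bound. First I would apply the standard savings trick to replace $N$ by a $\PSPACE$-computable martingale $N'$ that still succeeds on $Y$, now with $\lim_m N'(Y \uhr m) = \infty$ and with the additive savings property that $N'(\beta) \ge N'(\alpha) - c$ for some constant $c$ whenever $\alpha \preceq \beta$.

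The martingale $M$ on $Z$ is then defined by averaging $N'$ over the runs consistent with the current prefix: for a string $w$ of length $n$, set
\[ M(w) \;=\; 2^{-(g(n)-n)} \!\!\sum_{\substack{|\alpha| = g(n) \\ \alpha \sim_V w}} \!\! N'(\alpha). \]
Because $V$ is $g$-filling, every run of length $g(n)$ queries each of $0,\ldots,n-1$ exactly once, so among its $g(n)$ queries exactly $n$ are forced by $w$ and $g(n)-n$ are free; hence there are $2^{g(n)-n}$ consistent runs and $M(w)$ is their plain average of $N'$-values. I would verify the martingale identity $M(w) = (M(w0)+M(w1))/2$ by a conditional-expectation computation: conditioning on $Z$ extending $w$, the run bits read at steps $\ge g(n)$ all query fresh positions $\ge n$ and so are uniform, whence the fairness of $N'$ lifts through the averaging (a tower-property argument using the consistency lemma, which already decides consistency of a length-$g(n)$ run with $w$ from the first $g(|w|)$ queries). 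The space bound is routine: to evaluate $M(w)$ one enumerates the candidate runs $\alpha$ of length $g(n)$ sequentially, reusing space, tests $\alpha \sim_V w$ by simulating $V$, and accumulates $N'(\alpha)$; since $g$ is a polynomial and $V,N'$ lie in $\PSPACE$, and the resulting rational has polynomially many bits, $M$ is $\PSPACE$-computable.

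The heart of the argument, and the step I expect to be the main obstacle, is showing that $M$ genuinely succeeds on $Z$, since a priori the average over consistent runs could dilute the gains that $N'$ makes along the single true run $Y$. This is where the savings property and the $g$-filling structure combine. Fix $m$ and let $R_m = 1 + \max_{j<m} V(Y \uhr j)$ be one more than the largest position queried in the first $m$ steps of the true run. For every $n \ge R_m$ with $g(n) \ge m$, an induction on $j < m$ shows that any length-$g(n)$ run $\alpha$ with $\alpha \sim_V w$, where $w = Z \uhr n$, must satisfy $\alpha \uhr m = Y \uhr m$: at each such step the query $V(\alpha \uhr j) = V(Y \uhr j)$ is a forced position $< n$, so consistency pins $\alpha(j) = Z(V(Y \uhr j)) = Y(j)$. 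Thus every consistent run extends $Y \uhr m$, and the savings property gives $N'(\alpha) \ge N'(Y \uhr m) - c$ for each of them; averaging yields $M(Z \uhr n) \ge N'(Y \uhr m) - c$. Since $N'(Y \uhr m) \to \infty$, choosing for each $m$ a suitable $n$ forces $\limsup_n M(Z \uhr n) = \infty$, so $M$ succeeds on $Z$.

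It is worth noting that the polynomial bound on $g$ is used essentially only in the space analysis of $M$; the success argument invokes $g$-filling purely qualitatively, which is consistent with the paper's remark that dropping the resource bounds recovers the closure of recursive randomness under computable scanning functions.
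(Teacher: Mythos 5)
Your proposal is correct and follows essentially the same route as the paper's proof: the same averaging martingale $M(w)=2^{|w|-g(|w|)}\sum_{|\alpha|=g(|w|),\,\alpha\sim_V w} N'(\alpha)$ (the paper's $D$), the same appeal to the Savings Lemma to make the gains of the non-monotonic strategy persistent, and the same success argument showing that once $w=Z\uhr n$ answers all queries of a high-capital prefix of the true run, every consistent run extends that prefix. The only cosmetic difference is that you verify the martingale identity by a tower-property/conditional-expectation argument where the paper computes it directly by splitting the length-$g(n+1)$ runs consistent with $w$ into those consistent with $w0$ and with $w1$.
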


\begin{proof}
Suppose $Z \circ V$ is not polynomial space random. Let $G=(V,B)$
be a betting strategy in $\PSPACE$ that succeeds on $Z$;
thus, $B$ succeeds on $Z \circ V$.

We define a martingale $D$ in $\PSPACE$ that succeeds on $Z$.
We may assume that $g(n) \ge n$. For $t \ge g(|w|)$ let 
\[ D(w) = 2^{|w|-t} \sum_{|\aaa| = t \ \land \ \aaa \sim_V w} B(\aaa).\]
By the claim above and since $B$ is a martingale, this definition
is independent of $t$. Note that among the runs $\alpha$ of length
$t$, a fraction of $2^{-|w|}$ satisfy that $\alpha \sim_V w$; so
$D(w)$ is simply the average value of $B(\aaa)$ over all such $\aaa$. 

If we let $t = g(|w|)$, by the hypotheses that $G$ is in $\PSPACE$
and that $g$ is a polynomial, $D$ is in $\PSPACE$.

The rest of the argument somewhat simplifies the one of
\cite{Buhrman.etal:00} in the present context.

\begin{lemma}
$D$ is a martingale.
\end{lemma} 

\noindent Let $w$ be a string of length $n$. If $|\aaa| = g(n+1)$ and
$\alpha \sim_V w$, then either $\aaa \sim_V w0$ or $\aaa \sim_V w1$.
Letting $u= g(n+1)$, for each $r =0,1$ we have
\begin{center}
  $D(wr) = 2^{|w|+1 -u} \sum_{|\aaa| = u \ \land \ \aaa \sim_V {wr} }
           B(\aaa)$.
\end{center} 
Hence, since the definition of $D(w)$ does not depend on the choice
of $t\ge g(|w|)$, 
\begin{center}
$D(w0) + D(w1) =  2^{|w|+1 -u} \sum_{|\aaa| = u \
\land \ \aaa \sim_V {w} } B(\aaa) = 2 D(w)$.
\end{center} 

\begin{lemma} $D$ succeeds on $Z$. \end{lemma} 

\noindent
We may assume that $B(x)>0$ for each $x$. The Savings Lemma (see
e.g.\ Nies \cite[7.1.14]{Nies:book})
states that each computable martingale $M$ can be turned into a
computable martingale $\widehat M$ that succeeds on the same sets, and
has the extra property that $\widehat M(\beta) \ge \widehat M(\alpha)
-2$ for each strings $\beta \supseteq \alpha$ (namely, $\widehat M$
never loses more than $2$). It is easy to see from the proof that if
$M$ is computable in polynomial space, then so is $\widehat M$. So we
may assume that $B$ has this property.

This implies that for each 
$c\in \nat$ there is a prefix $\alpha$ of $Z \circ V$ such that 
\begin{center}
  $B(\beta ) \ge c$ for each string $\beta \succeq \aaa$.
\end{center}
By definition of $Z \circ V$ we have $\aaa(i) = Z(V(\aaa\uhr i))$ for each
$i < |\aaa|$. Let $r = 1+ \max_{i< |\aaa|} V(\aaa\uhr i)$ be $1+$
the maximum query asked in the run of $V$ on $\aaa$, and let $w = Z
\uhr r$. So $g(r) \ge |\aaa|$. 

If $\beta\sim_V w$ is a string such that $|\beta| =g(r)$, then $\beta
\succeq \aaa$, for $\aaa(r) \neq \beta(r)$ for some $r < |\aaa|$
would imply that $\beta \not \sim_V w$ as $w$ answers all such 
queries correctly. So $B(\beta) \ge c$. Hence $D(w) \ge c$ because
$D(w)$ is the average over values $B(\beta)$ for all such $\beta$.
\end{proof}

\begin{corollary} Let $S$ be a polynomial time computable permutation
of $\{0\}^*$ such that $S^{-1}$ is polynomially bounded. If $Z$ is
polynomial space random, then so is $Z \circ S$. 
\end{corollary}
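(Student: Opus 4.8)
The goal is to derive the corollary from Theorem~\ref{thm:scan}. The strategy is to realize any suitable permutation $S$ as a scanning function $V$ to which the theorem applies. The plan is as follows. Given a permutation $S$ of $\{0\}^*$, define a scanning function $V$ whose run on a prefix $Y \uhr i$ simply announces the queries $S(0), S(1), S(2), \ldots$ in order, independently of the answers; that is, $V(\alpha) = S(|\alpha|)$ for every run $\alpha$. Since $S$ is injective, the values $S(0), S(1), \ldots, S(|\alpha|-1)$ are pairwise distinct, so $V(\alpha) = S(|\alpha|) \neq S(i) = V(\alpha \uhr i)$ for each $i < |\alpha|$, confirming that $V$ is a genuine scanning function. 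For this $V$ one checks directly that $Z \circ V = Z \circ S$, since $(Z\circ V)(i) = Z(V((Z\circ V)\uhr i)) = Z(S(i)) = (Z \circ S)(i)$ by induction on $i$.

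It then remains to verify the two hypotheses of Theorem~\ref{thm:scan}: that $V \in \PSPACE$ and that $V$ is $g$-filling for a polynomial $g$. The first is immediate, since $V(\alpha)$ is computed by running $S$ on input $|\alpha|$, and $S$ being polynomial time computable is in particular polynomial space computable. The second hypothesis is where the assumption that $S^{-1}$ is polynomially bounded is used, and I expect this to be the main point of the argument. Let $q$ be a polynomial bounding $S^{-1}$, so that $S^{-1}(r) \le q(r)$ for all $r$; equivalently, every value $r < n$ is equal to $S(j)$ for some $j \le q(r) \le q(n)$. Taking $g(n) = q(n) + 1$, any run $\alpha$ of length $g(n)$ has already posed all queries $S(0), \ldots, S(g(n)-1)$, which include every $r < n$; hence $\forall r < n\, \exists i\, V(\alpha \uhr i) = r$, as required. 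Since $q$ is a polynomial, so is $g$.

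With both hypotheses established, Theorem~\ref{thm:scan} yields that $Z \circ V = Z \circ S$ is polynomial space random whenever $Z$ is, completing the proof. The only genuine content beyond bookkeeping is the translation of the polynomial bound on $S^{-1}$ into the $g$-filling condition; everything else is a matter of matching definitions. One should take slight care that the $g$-filling requirement quantifies over \emph{all} runs $\alpha$ of length $g(n)$, not merely those consistent with some particular $Z$, but because our $V$ ignores the answer bits entirely, the set of queries posed along a run of a given length does not depend on $\alpha$, so the condition holds uniformly.
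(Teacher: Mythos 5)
Your proposal is correct and follows exactly the paper's route: the paper likewise sets $V_S(\alpha) = S(|\alpha|)$, notes that the polynomial bound on $S^{-1}$ makes $V_S$ polynomially filling, and invokes Theorem~\ref{thm:scan}. You simply spell out the verifications (injectivity giving the scanning-function property, $Z \circ V = Z \circ S$, and the choice $g = q+1$) that the paper leaves implicit.
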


\begin{proof} 
The permutation $S$ can be viewed as a scanning function $V_S$ that only
looks at the length of the input: $V_S(\aaa) = S(|\aaa|)$.
By hypothesis on $S$, the scanning function $V_S$
is polynomially filling. So $Z\circ S = Z \circ V_S$ is polynomial
space random by the theorem.
\end{proof} 

\noindent
The foregoing corollary can be restated in terms of randomness on
languages in the sense of \cite{Ambos.Mayordomo:97}: Let $S$ be a
exponential time computable permutation
of $\{0,1\}^*$ such that $|S^{-1}(x)| = O(|x|)$ for each string $x$.
If a language $Z$ is exponential space random, then so is $Z \circ S$. 

We end with a question. Recall that $\mathsf{PP}$ denotes
probabilistic polynomial time, a subclass of $\PSPACE$.
If $\P= \mathsf{PP}$, is polynomial time randomness closed
under permutations $S$ of $\{0\}^*$ such that $S, S^{-1}$ are polynomial time
computable?

\medskip
\noindent
{\bf Acknowledgments.} The authors would like to thank Eric Allender, Klaus Ambos-Spies, Alexander Galicki, Elvira Majordomo, 
and Wolfgang Merkle for discussions and comments.

{A.~Nies is supported
in part by the Marsden Fund of the Royal Society of New Zealand, UoA
13-184. F.~Stephan
is supported in part by the Singapore Ministry of Education Academic
Research Fund Tier 2
grant MOE2016-T2-1-019 / R146-000-234-112. Part of this work was done
while F.~Stephan
was on sabbatical leave at the University of Auckland. The work was
completed while  Nies  visited the Institute for Mathematical Sciences
at NUS during the  2017 programme ``Aspects of Computation''.}

\def\cprime{$'$} \def\cprime{$'$}

\end{document}